\documentclass[preprint,12pt,authoryear]{elsarticle}
\usepackage{amsmath}
\usepackage{amsfonts}
\usepackage{amssymb}
\usepackage{amsthm}
\usepackage{hyperref}
\usepackage{url}
\usepackage{indentfirst}
\usepackage{tikz}
\usepackage{filecontents}
\usepackage{booktabs}
\usepackage[noend]{algpseudocode}
\usepackage[margin=1.5in]{geometry}    
\usepackage{algorithm}
\usepackage{graphicx}
\usepackage{float}
\usetikzlibrary{shapes,positioning}
\usepackage{enumerate}
\usepackage{comment}
\usepackage{caption}

\usepackage{subcaption}

\usepackage{natbib}

\newtheorem{proposition}{Proposition}[section]

\newtheorem{example}{Example}[section]

\usepackage{chngcntr}
\counterwithout{remark}{section}
\counterwithout{figure}{section}
\counterwithout{table}{section}
\counterwithout{theorem}{section}
\counterwithout{lemma}{section}
\counterwithout{proposition}{section}
\begin{document}
\begin{frontmatter}

\title{Bridge distances for networks}

\author[hse,rutgers]{Vladimir Gurvich}
\ead{vgurvich@hse.ru}

\author[rutgers_business]{Mariya Naumova}
\ead{mnaumova@business.rutgers.edu}

\address[hse]{National Research University Higher School of Economics, Moscow, Russia}
\address[rutgers]{Rutgers Center for Operations Research, Rutgers University, Piscataway, New Jersey, United States}
\address[rutgers_business]{Rutgers Business School, Rutgers University, Piscataway, New Jersey, United States}

\begin{abstract}
Let  $G = (V,E)$ be a finite directed graph  
with a non-negative real length $\mu_e$  assigned 
to every directed edge $e \in E$. 
We assume that $\mu_e = +\infty$ for every non-edge $e \not\in E$.   
Fix any two distinct vertices  $a, b \in V$. 
A directed path from $a$ to $b$  is called an $(a,b)$-path. 
An edge $e$  is called an $(a,b)$-bridge if it belongs to all  $(a,b)$-paths. 
Furthermore, it is not difficult to show that  
all $(a,b)$-paths pass all $(a,b)$-bridges in the same order.  
Define the distance $\mu(a,b)$  from  $a$  to  $b$  
as the sum of lengths of all  $(a,b)$-bridges. 
Furthermore, $\mu(a,b) = \infty$ if there are no  $(a,b)$-paths and 
$\mu(a,b) = 0$  if  $(a,b)$-paths exist   
but there are no  $(a,b)$-bridges.  
It is easily seen that  $\mu(a,b)$  can be computed in polynomial time
and the metric inequality 
$\mu(a,b) \leq \mu(a,c) + \mu(c,b)$ holds for every $a,b,c \in V$. 
Furthermore, equality holds if and only if 
each  $(a,b)$-bridge is either an $(a,c)$- or a  $(c,b)$-bridge. 
\newline
We will show that this is  a special limit case $r=s \rightarrow 0$ 
of the inequality $\mu(a,b)^{s/r} \leq \mu(a,c)^{s/r} + \mu(c,b)^{s/r}$ 
obtained for all positive real parameters  $r$ and $s$  in the paper 
``Metric and ultrametric inequalities for directed graphs'', 
Discrete Appl. Math. 314 (2022) 93--104, 
along with 3 other limit cases 
 $r=s \rightarrow \infty$,  $r=1, s \rightarrow \infty$, and   $s = 1, r \rightarrow 0$,  
 considered in that paper.
\end{abstract}

\begin{keyword}
    Networks \sep metric and ultrametric inequalities\\
    AMS subject classifications: Primary 05C12, 05C20; Secondary 94C15, 05C21, 54E35
     \end{keyword}

\end{frontmatter}

\section{Introduction}
\label{s0}
The present article is an extension of \cite{Gur22}.  

We assume that the reader is familiar with the main results and notation of \cite{Gur22}, 
although it is not necessary to read the whole paper carefully.  

\subsection{Main results of \cite{Gur22}}
Let  $G = (V,E)$ be a finite weighted directed graph (digraph)  
with a non-negative real weight 
(which will be called {\em length})  $\mu_e$  assigned 
to every directed edge $e \in E$.  
Loops are forbidden, 
but parallel edges, directed oppositely or similarly, are allowed.  
The values  $\mu_e = 0$ and $\mu_e = \infty$ are also allowed. 
In particular, we can extend  $G$  to a complete digraph 
assigning  $\mu_e = \infty$ to every nonedge $e \not\in E$. 
We introduce $\lambda_e = 1/\mu_e$  assuming standardly that 
$1/0 = \infty$ and $1/\infty = 0$. 

In \cite{Gur22}, we fix two positive real parameters $r$  and $s$ 
and for each ordered pair of vertices $a,b \in V$, 
we define the distance $\mu_{a,b}$  from  $a$  to  $b$. 
We use the following standard names: 
$a$ - source, $b$ - sink, pair $a,b$ - poles, 
the network with fixed source and sink 
is called the {\em two-pole} network, 
$\mu_{a,b}$ - its resistance or length, 
$\lambda_{a,b} = 1/\mu_{a,b}$ - 
the conductance or the inverse length. 

As the main result, for every triple $a,b,c \in V$ 
we prove the inequality 

\begin{equation} 
\label{eq-main} 
\mu(a,b)^{s/r} \leq \mu(a,c)^{s/r} + \mu(c,b)^{s/r}.
\end{equation}

It implies the standard metric inequality 
$\mu(a,b) \leq \mu(a,c) + \mu(c,b)$  if $s \geq r$ and 
the ultrametric inequality 
$\mu(a,b) \leq \max(\mu(a,c),\mu(c,b))$ in the limit case $r/s \rightarrow 0$. 

The equality in \ref{eq-main}  holds if and only if 
every directed path from $a$ to $b$ (so-called $(a,b)$-path) contains  $c$. 

Note that this claim holds for every proper case 
($r>0$ and $s>0$), 
but in some limit cases the only if part fails; see \cite{Gur22}. 

\subsection{Special cases and history} 

{\bf Isotropic networks.}
A digraph  $G$  is called {\em symmetric} if its edges
are split into a set of oppositely directed pairs:
$e' = (v', v''), e'' = (v'', v')$.
Respectively, a network is called {\em symmetric} or {\em isotropic} 
if its graph is  symmetric and
$\mu_{e'} = \mu_{e''}$  for each pair  $e', e''$  considered above.
Then, one can replace each such pair  $e', e''$
in  $G$  by a non-directed edge  $e$, 
thus, replacing the digraph of the network by a non-directed graph.
Set  $\mu_e  = \mu_{e'} = \mu_{e''}$  for every such  $e$. 

For this case, inequality (\ref{eq-main}) was proven in 1987. 
Note that the isotropic case is simpler than 
the anisotropic one due to uniqueness of the potentials; 
see \cite{Gur22}  for more details. 
Equality $\mu_{a,b} = \mu_{b,a}$ 
holds for any pair of poles  $a$  and  $b$  of an isotropic network.  
Thus, symmetric networks generate metric or ultrametric spaces, 
while we have only quasi-metric or quasi-ultrametric spaces  
for anisotropic networks.

\bigskip

{\bf Linear networks}, $r=s=1$.
In the isotropic linear case, the triangle inequality for resistances 
was discovered by Gerald Subak-Sharpe  (\cite{Sha67,Sha67a}). 
This result was rediscovered several times later.
For the linear anisotropic networks   
inequality  (\ref{eq-main})
(along with many related results) was  proven in \cite{YSL16}.

\section{Two proper and four limit cases}

\begin{figure}[h]
\begin{center}
\begin{tikzpicture}
  \draw [->,thick] (-.25in,0in) to (2.25in,0in);
  \draw [->,thick] (0,-0.25in) to (0,2.25in);
  
  \draw [very thin] (0,1in) to (.9in,1in);
  \draw [very thin] (1.1in,1.1in) to (1.25in,1.25in);
  \draw [very thin] (1.1in,1in) to (1.25in,1in);
  
  \draw [very thin] (1in,.7in) to (1in,.9in);
  \draw [very thin] (1in,1.1in) to (1in,2.25in);
  
  \draw [very thin] (0.75in,0.75in) to (.9in,.9in);
  \draw [very thin] (1.1in,1.1in) to (1.25in,1.25in);
  \draw [->,line width=2pt] (0.75in,0.75in) to (0.2in,0.2in); 
  \draw [->,line width=2pt] (1in,.5in) to (1in,0.2in);
   
  \draw [->,line width=2pt] (1.25in,1in) to (2.05in,1in);
  \node at (2.18in,1in) {$\bf{W}$};
  
  \draw [->,line width=2pt] (1.25in,1.25in) to (2in,2in);
  \node at (2.12in,2.12in) {$\bf{L}$};
  
  \node at (1in,1in) {$\bf{E}$};
  \node at (1in,0.6in) {$\bf{H}$};
  \node at (0.1in,0.1in) {$\bf{B}$};
  \node at (1in,0.1in) {$\bf{F}$};
  
  \node at (0,1in) [shape=coordinate,label=left:$1$]  {};
  \node at (1in,0) [shape=coordinate,label=below:$1$]  {};
  \node at (0,2.25in) [shape=coordinate,label=left:$r$]  {};
  \node at (2.25in,0) [shape=coordinate,label=below:$s$]  {};
  \node at (0,0) [shape=coordinate,label={below left:$0$}] {};
\end{tikzpicture}
\end{center}
  \caption{The $s \times r$ diagram} 

  \label{pic1}
\end{figure}

In Figure \ref{pic1}  we have two proper cases 
\begin{itemize}
\item[] $r=s=1$  (E, electricity, resistance), 
\item[]$r=1/2, s=1$ (H, hydraulics and gas dynamics, resistance), 
\end{itemize}
and four limit cases
\begin{itemize}
\item[] $r=s \rightarrow \infty$ (L - length; the shortest  (a,b)-path length or travel time), 
\item[] $r=1, s \rightarrow \infty$ (W - width, the widest bottleneck (a,b)-path width), 
\item[] $r\rightarrow 0, s=1$ (F - flow, the inverse  $(a,b)$-capacity),  
\item[] $r=s \rightarrow 0$  (B - bridges, the total length of the  $(a,b)$-bridges).  
\end{itemize}

Let us briefly recall the interpretation of the five cases from \cite{Gur22}. 
Case B, which was not addressed in \cite{Gur22}, will be considered in the next section.  

We will analyze two simplest two-pole networks in Figure \ref{pic2} 
and for each of them give a formula for $\mu_{a,b}$ 
(or, equivalently, for $\lambda_{a,b}$; recall that $\mu = 1/\lambda$). 

\begin{figure}[h]
\begin{center}

\parbox[c]{1.2in}{
\begin{tikzpicture}[scale=.6]
  \node at (-1,0) [shape=circle,minimum size=6pt,draw=black,thick,label=left:$a$] (a) {};
  \node at (1,0) [shape=circle,minimum size=6pt,draw=black,thick,label=right:$b$] (b) {};
  \draw[->] (a) to [out=20,in=160] node [above] {$e'$} (b);
  \draw[->] (a) to [out=-20,in=-160] node [below] {$e''$} (b);
\end{tikzpicture}
}\qquad
\parbox[c]{2in}{
\begin{tikzpicture}[scale=.6]
  \node at (-2,0) [shape=circle,minimum size=6pt,draw=black,thick,label=left:$a$] (a) {};
  \node at (0,0) [shape=circle,minimum size=6pt,draw=black,thick,label=below:$c$] (c) {};
  \node at (2,0) [shape=circle,minimum size=6pt,draw=black,thick,label=right:$b$] (b) {};
  \draw[->] (a) to node [above] {$e'$} (c);
  \draw[->] (c) to node [above] {$e''$} (b);
\end{tikzpicture}
}
\end{center}

  \caption{Parallel and series connection.}
  \label{pic2}
\end{figure}

\bigskip 
\bigskip 
\bigskip 
\bigskip 

{\bf General proper case: $r > 0, s> 0$} 

\begin{proposition} (\cite{Gur22}). 
\label{l-ps}
The resistances of these two networks can be
determined, respectively, from formulas

\begin{equation}
\label{eq-ps}
\mu_{a,b}^{-s} = \mu_{e'}^{-s} + \mu_{e''}^{-s}
\;\;\; \mbox{and} \;\;\;
\mu_{a,b}^{s/r} = \mu_{e'}^{s/r} + \mu_{e''}^{s/r}.
\end{equation}
\end{proposition}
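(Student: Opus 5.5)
We have to recall the model from \cite{Gur22}. There each edge $e=(u,v)$ carries a flow $y_e$ driven by the potential difference $x_e = x_u - x_v$ according to the power law $y_e = \lambda_e^{s}\, x_e^{r}$ for $x_e\ge 0$ (and no flow in the opposite direction). The resistance of a two-pole network is then fixed by the scale relation between the total flow $y_{a,b}$ and the pole voltage $x_{a,b}$. The plan is to solve the potential/flow equations explicitly for the two networks of Figure \ref{pic2}, using only Kirchhoff's laws, and read off $\mu_{a,b}$ from that relation.

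\textbf{Parallel connection.} Both edges $e',e''$ join $a$ to $b$, so they see the same voltage $x = x_a - x_b$. The total flow is
\[
y = y_{e'} + y_{e''} = (\lambda_{e'}^{s} + \lambda_{e''}^{s})\, x^{r}.
\]
Comparing with the defining relation $y=\lambda_{a,b}^{s}x^{r}$ for the two-pole network gives $\lambda_{a,b}^{s}=\lambda_{e'}^{s}+\lambda_{e''}^{s}$. Substituting $\lambda=1/\mu$ turns this into the first formula of (\ref{eq-ps}).

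\textbf{Series connection.} The same flow $y$ passes through $e'$ and $e''$, since Kirchhoff's law holds at $c$, and the voltages add: $x = x_{e'} + x_{e''}$. Inverting the law on each edge gives $x_e = (y/\lambda_e^{s})^{1/r} = \mu_e^{s/r}\, y^{1/r}$. Hence
\[
x = \bigl(\mu_{e'}^{s/r} + \mu_{e''}^{s/r}\bigr)\, y^{1/r}.
\]
The network relation $x=\mu_{a,b}^{s/r}y^{1/r}$ then yields the second formula.

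The only real obstacle is bookkeeping. We must make sure the normalization of the edge law and of the two-pole resistance $\mu_{a,b}$ is the same as in \cite{Gur22}, and check that the resulting quantity does not depend on the chosen $x$ or $y$. Independence follows from homogeneity: both sides scale as $x^r$, equivalently $y^{1/r}$. Finally, the degenerate values $\mu_e\in\{0,\infty\}$ are handled by continuity, using the conventions $1/0=\infty$ and $1/\infty=0$.
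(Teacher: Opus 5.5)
Your proof is correct and takes essentially the route of the cited source \cite{Gur22}: the paper gives no proof of its own, but its Joule--Lenz formula (\ref{dissipation-e}) uses exactly your edge law $y_e=\lambda_e^{s}x_e^{r}$, equivalently $x_e=\mu_e^{s/r}y_e^{1/r}$, and from this both formulas follow from Kirchhoff's laws and homogeneity just as you compute. Your remarks on one-way conduction (all voltages are nonnegative in these two networks) and on the degenerate values $\mu_e\in\{0,\infty\}$ are sufficient here.
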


The following  properties of the convolution  
$\mu(t) = (\mu_{e'}^t +  \mu_{e''}^t)^{1/t}$ 
are both obvious and well-known: 
\begin{equation}
\label{convolution}
\mu(t) \rightarrow \max(\mu_{e'}, \mu_{e''}),
\;\mbox{as}\;
t \rightarrow + \infty, \;\; 
\mu(t) \rightarrow \min(\mu_{e'}, \mu_{e''}),
\;\mbox{as}\;
t \rightarrow - \infty.
\end{equation}

{\bf Case E, $r=s=1$,}  corresponds to the electrical semiconductor   
networks satisfying the linear Ohm's law; 
$\mu_e$  is the resistance of  $e$ and $\lambda_e = 1/\mu_e$ is its conductance. 
Respectively, 
$\mu_{a,b}$  and $\lambda_{a,b} = 1/\mu_{a,b}$ 
are the resistance and conductance of the two-pole network from $a$ to $b$. 
Applying Proposition \ref{l-ps} for $r=s=1$, we obtain familiar high school formulas  
$$\lambda_{a,b} = \lambda_{e'} + \lambda_{e''} \;\; \text{and}  \;\; 
\mu_{a,b} = \mu_{e'} + \mu_{e''}$$
for the parallel and series networks, respectively.  

\medskip 

{\bf Case H, $r=1/2, s=1,$}  corresponds to the pipeline network 
satisfying quadratic conductance law, typical for hydraulics and gas dynamics; 
$\mu_e, \lambda_e = 1/\mu_e$ and  $\mu_{a,b}, \lambda_{a,b} = 1/\mu_{a,b}$ 
are interpreted similarly. 
Applying Proposition \ref{l-ps} for $r=1/2, s=1$, we obtain 
$$\lambda_{a,b} = \lambda_{e'} + \lambda_{e''} \;\; \text{and}  \;\; 
\mu_{a,b} = (\mu_{e'}^2 + \mu_{e''}^2)^{1/2}$$
for the parallel and series networks, respectively.  

\bigskip 

Now determine $r$ and $s$ for the four limit cases.  
The procedure will be inverse. 
Common sense will help us with two convolutions 
for the parallel and series networks, 
from which $r$  and  $s$  will be derived. 

\medskip

{\bf Case L.}  Every edge  $e$  is a one-way path  
and $\mu_e$  is its length or travel time. 
Furthermore,  $\mu_{a,b}$  is the length of a shortest 
directed path from $a$ to $b$. 
Obviously, 
$$\mu_{a,b} = \min( \mu_{e'}, \mu_{e''}) \;\; \text{and}  \;\; 
\mu_{a,b} = \mu_{e'} + \mu_{e''},$$ 
which results in $r=s \rightarrow \infty$, by Proposition \ref{l-ps}.

\medskip

{\bf Case W.}  Every edge  $e$  is a one-way path  and $\mu_e$  is its width. 
For example, $e$ may be a bridge, tunnel, or canal, 
and  $\lambda_e$  is the maximum weight or height of a car, 
or the tonnage of a ship that may still pass  $e$. 
Furthermore, $\lambda_{a,b}$  is the width of the bottleneck path, 
in other words, it is the maximum ``size'' of an object 
that can pass from  $a$  to  $b$. 
Obviously, 
\[
\lambda_{a,b} = \max( \lambda_{e'}, \lambda_{e''} ) \;\; \text{and} \;\; 
\lambda_{a,b} = \min( \lambda_{e'}, \lambda_{e''} ),
\] 
which results in $r=1, s \rightarrow \infty$, by Proposition \ref{l-ps}.

\medskip

{\bf Case F.}  Every edge  $e$  is a one-way pipe  and $\lambda_e$  is its capacity.  
Furthermore, $\lambda_{a,b}$  is the capacity of the two-pole network, 
that is, the maximal flow from $a$ to  $b$.  
Obviously, 
$$\lambda_{a,b} =  \lambda_{e'} + \lambda_{e''} \;\; \text{and}  \;\; 
\lambda_{a,b} = \min(\lambda_{e'}, \lambda_{e''}),$$ 
which results in $s=1, r \rightarrow 0$, by Proposition \ref{l-ps}. 

\medskip 

These 3 limit cases were considered in \cite{Gur22}. 
Note that calculating $r$ and $s$ as shown above immediately proves the inequality  (\ref{eq-main})  
only in case of the  series-parallel networks. 
The proof for all networks is essentially more complicated (\cite{Gur22}).   

\medskip 

In all cases we obtain quasi-metric spaces, since $s \geq r$;  
in cases $F$ and $W$ we get quasi-ultrametric inequalities, 
since $s/r \rightarrow \infty$. 
Yet, in all limit cases an accurate proof is required, 
which can be found in \cite{Gur22}. 

Note that equality (\ref{eq-main}) 
is equivalent with the condition 
``Every   $(a,b)$-path contains  $c$'' 
for every proper case $r > 0, s > 0$. 
Yet, for the above three limit cases 
the second implies the first one but not vice versa (\cite{Gur22}). 
The same will hold for the limit case B; 
see the next section. 

\bigskip 

Furthermore, cases $E, H, F$ are ``quantum'': 
$\mu_{a,b}$ depends on all  $(a,b)$-paths, 
while cases L and W  are ``classical'': 
$\mu_{a,b}$ is determined by a shortest $(a,b)$-path; 
see \cite{Gur22} for more details. 

\section{Bridge distance. Case B} 
We will define the bridge distance, compute it in polynomial time, prove metric inequality, study when it holds with equality. 
Then, we will show that this distance corresponds to the limit case $r=s \rightarrow 0$. 

\subsection{Definition and main properties} 

Let $G = (V,E)$ be a digraph with the source $a$ and sink $b$. An edge $e \in E$ is called an $(a,b)$-bridge if 
it belongs to each  $(a,b)$-path. In other words, after deleting  $e$, there is no $(a,b)$-path. 
The latter property can be verified in time $O(|V|+|E|)$ by a standard graph traversal algorithm like Breadth- or Depth-First Search.

Note that $e$ may belong to a unique $(a,b)$-path not being an $(a,b)$-bridge.  

\begin{example} 

We have  $\mu(a,b) = 0$, since there are two parallel 2-paths from  $a$  to  $b$. 
Yet, these two paths do not contain  $e = (c,d)$. Thus, $e$  is not an $(a,b)$-bridge, 
although there is a unique $(a,b)$-path through $e$; see Figure \ref{pic3}. 

\begin{figure}[H] 
\centering 
\begin{tikzpicture}[
  ->, 
  shorten >=1pt, 
  auto, 
  node distance=2.5cm,
  thick, 
  main node/.style={circle, fill=white, draw, font=\sffamily\small, thin} 
]

  \node[main node] (a) {a};
  \node[main node] (c) [below right of=a] {c};
  \node[main node] (d) [above right of=c] {d};
  \node[main node] (b) [below right of=d] {b};

  \path
    (a) edge (c)
    (a) edge (d)
    (c) edge (d)
    (c) edge (b)
    (d) edge (b);

\end{tikzpicture}
 \caption{Directed edge $(c,d)$  belongs to a unique  $(a,b)$-path not being an $(a,b)$-bridge}
\label{pic3}
\end{figure}

\end{example} 

The next statement is simple but strong. 

\begin{proposition}
\label{order}
All  $(a,b)$-paths pass all  $(a,b)$-bridges in the same order.  
\end{proposition}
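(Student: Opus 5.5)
We argue by contradiction, using path splicing. Let $P$ and $Q$ be two $(a,b)$-paths and let $e=(u,v)$ and $f=(x,y)$ be two distinct $(a,b)$-bridges. Since $e,f$ are bridges, both lie on $P$ and on $Q$. We read ``path'' as a simple directed path, so each edge occurs at most once on each path and the position of a bridge along a path is well defined. Suppose, for contradiction, that $P$ traverses $e$ before $f$ while $Q$ traverses $f$ before $e$. The aim is to build an $(a,b)$-walk that avoids $e$ (or avoids $f$). Any $(a,b)$-walk contains an $(a,b)$-path using only its edges, so this would contradict the bridge property.

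\textbf{The splice.} Write $P = P_1\, e\, P_2\, f\, P_3$ and $Q = Q_1\, f\, Q_2\, e\, Q_3$. Then:
\begin{itemize}
\item $P_1$ runs from $a$ to $u$ and $P_3$ runs from $y$ to $b$;
\item $Q_1$ runs from $a$ to $x$ and $Q_3$ runs from $v$ to $b$.
\end{itemize}
Consider the walk $P_1$ followed by an edge-free segment to $Q_3$, which is not immediately available. Instead, consider the walk
\[
W = Q_1\, f\, P_3 ,
\]
which goes from $a$ to $x$, crosses $f$, and then runs from $y$ to $b$.

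\textbf{Why $W$ avoids $e$.}
\begin{itemize}
\item $Q_1$ is the part of $Q$ before $f$. On $Q$, the edge $e$ comes after $f$, and $Q$ is simple, so $e\notin Q_1$.
\item $P_3$ is the part of $P$ after $f$. On $P$, the edge $e$ comes before $f$, and $P$ is simple, so $e\notin P_3$.
\item $f\neq e$.
\end{itemize}
Hence $W$ is an $(a,b)$-walk that does not use $e$. Shortcutting the repeated vertices of $W$ yields an $(a,b)$-path whose edges form a subset of the edges of $W$, so it also avoids $e$. This contradicts $e$ being an $(a,b)$-bridge. Symmetrically, $P_1\, e\, Q_3$ would avoid $f$, though one contradiction suffices.

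\textbf{Conclusion.} For each pair of bridges, the relative order is the same on every $(a,b)$-path. The bridge sets are identical across paths, and the pairwise relative orders agree, so the total orders coincide.

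The only delicate point is ensuring $e$ does not reappear inside $Q_1$ or $P_3$. This is exactly where simplicity of paths is used, and also where walk-to-path reduction must only delete edges. If the paper allowed non-simple paths, we would first replace $P$ and $Q$ by simple subpaths; bridges are still on those, and the argument then applies.
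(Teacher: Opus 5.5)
Your proof is correct and is essentially the paper's argument. The paper takes the first bridge $e_{\ell+1}$ that a second path $p''$ ``skips'' and asserts that it cannot be a bridge; your splice $Q_1\,f\,P_3$ (follow $p''$ through the bridge it reaches instead, then continue along $p'$) is exactly the step that justifies that assertion, and your use of an arbitrary inverted pair, checking via simplicity that $e$ lies on neither $Q_1$ nor $P_3$, makes explicit what the paper leaves implicit.
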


\proof 
The first part is straightforward. 

Assume that there are  $k$  $(a,b)$-bridges and that an  $(a,b)$-path  $p'$  passes them  
in the order $e_1, \ldots, e_k$, while another $(a,b)$-path  $p''$ 
passes  $e_1, \ldots e_\ell$, for some $\ell < k$, and then skips $e_{\ell+1}$. 
Then, the latter is not an $(a,b)$-bridge. 
\qed 

This statement can be viewed as a version of the Menger Theorem for directed graphs (\cite{Men927}).

\medskip 

Given a two-pole network, define the bridge distance $\mu_{a,b}$  as follows.
If  $G$  has no $(a,b)$-path then $\mu_{a,b} = \infty$. 
Otherwise,  $\mu_{a,b}$  is the total length of all $(a,b)$-bridges.  
In particular, if  $G$  has $(a,b)$-paths but has no $(a,b)$-bridges 
then $\mu_{a,b} = 0$. 

\begin{proposition}
\label{p3}
The metric inequality 
\[
\mu(a,b)\le \mu(a,c)+\mu(c,b)
\]
holds for all $a,b,c\in V$, and it becomes an equality whenever every
$(a,b)$-path contains $c$.
\end{proposition}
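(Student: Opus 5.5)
The plan is to reduce both claims to set inclusions between families of bridges. Write $B(x,y)$ for the set of $(x,y)$-bridges. Since all lengths $\mu_e$ are non-negative, it suffices to prove two inclusions: $B(a,b)\subseteq B(a,c)\cup B(c,b)$ in general, and, under the hypothesis that every $(a,b)$-path passes through $c$, $B(a,c)\cup B(c,b)\subseteq B(a,b)$ with $B(a,c)\cap B(c,b)=\emptyset$. Throughout I use the elementary fact that every directed walk from $x$ to $y$ contains an $(x,y)$-path whose edges form a subset of the walk's edges; this is obtained by shortcutting repeated vertices. I treat the degenerate case $c\in\{a,b\}$ with the convention $\mu(x,x)=0$, coming from the trivial path, which has no bridges; there both claims are immediate.

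First I dispose of infinite values. If $\mu(a,c)=\infty$ or $\mu(c,b)=\infty$, the inequality is trivial. Otherwise there are an $(a,c)$-path $P$ and a $(c,b)$-path $Q$. Their concatenation is an $a$--$b$ walk, which contains an $(a,b)$-path, so $\mu(a,b)<\infty$.

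For the inequality, take $e\in B(a,b)$ and suppose $e\notin B(a,c)$ and $e\notin B(c,b)$. Then there exist an $(a,c)$-path and a $(c,b)$-path, both avoiding $e$. Their concatenation is an $a$--$b$ walk avoiding $e$, so it contains an $(a,b)$-path avoiding $e$. This contradicts $e$ being an $(a,b)$-bridge. Hence $B(a,b)\subseteq B(a,c)\cup B(c,b)$, and summing the non-negative lengths gives
\[
\mu(a,b)\le\sum_{e\in B(a,c)\cup B(c,b)}\mu_e\le\mu(a,c)+\mu(c,b).
\]

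For equality, assume every $(a,b)$-path contains $c$. If no $(a,b)$-path exists, then by the first step one of $\mu(a,c),\mu(c,b)$ is infinite, so both sides equal $\infty$. Otherwise, every $(a,b)$-path visits $c$ exactly once, and it splits there into an $(a,c)$-prefix and a $(c,b)$-suffix.

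To show $B(a,c)\subseteq B(a,b)$, take $e\in B(a,c)$. The prefix of any $(a,b)$-path is an $(a,c)$-path, so it contains $e$, and therefore $e\in B(a,b)$. The same argument with suffixes gives $B(c,b)\subseteq B(a,b)$.

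To show the two sets are disjoint, suppose $e$ lies in both $B(a,c)$ and $B(c,b)$. Then a fixed $(a,b)$-path would contain $e$ both in its prefix and in its suffix. This is impossible, since a path uses each edge at most once. Therefore $B(a,b)=B(a,c)\sqcup B(c,b)$, and the sums agree.

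The only delicate point I anticipate is this disjointness step, together with the walk-to-path reduction. Both must be stated carefully because parallel edges and zero lengths are allowed. Nonnegativity makes the overlap harmless for the inequality, but it really is needed for equality.
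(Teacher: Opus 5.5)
Your proof is correct and follows essentially the same route as the paper. It shows $B(a,b)\subseteq B(a,c)\cup B(c,b)$ by concatenating an $(a,c)$-path and a $(c,b)$-path and summing non-negative lengths; for equality it splits each $(a,b)$-path at $c$ to get a disjoint union of bridge sets. Your version is more careful than the paper's, since it explicitly handles the vacuous case with no $(a,b)$-paths and justifies the walk-to-path reduction and the disjointness step.
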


\proof 
If there are no $(a,c)$-paths or no $(c,b)$-paths, then either $\mu(a,c) = \infty$ or $\mu(c,b) = \infty$, and the inequality $\mu(a,b) \leq \mu(a,c) + \mu(c,b)$ holds trivially.

Assume both $(a,c)$-paths and $(c,b)$-paths exist. The concatenation of any $(a,c)$-path and any $(c,b)$-path forms a directed walk from $a$ to $b$, which necessarily contains an $(a,b)$-path. 

Let $e$ be an $(a,b)$-bridge. By definition, $e$ must belong to all $(a,b)$-paths, which includes the paths derived from concatenating any $(a,c)$-path with any $(c,b)$-path. Therefore, $e$ must belong to either all $(a,c)$-paths or all $(c,b)$-paths. In other words, every $(a,b)$-bridge is either an $(a,c)$-bridge or a $(c,b)$-bridge. 

Since the set of $(a,b)$-bridges is a subset of the union of $(a,c)$-bridges and $(c,b)$-bridges, summing their lengths yields the metric inequality:
\[
\mu(a,b) \leq \mu(a,c) + \mu(c,b).
\]

Furthermore, if $c$ belongs to every $(a,b)$-path, then the journey from $a$ to $b$ is strictly partitioned at $c$. Consequently, the set of all $(a,b)$-bridges is exactly the disjoint union of the sets of $(a,c)$-bridges and $(c,b)$-bridges. Therefore, the inequality becomes an equality:
\[
\mu(a,b) = \mu(a,c) + \mu(c,b).
\]
\qed 

Yet, the converse of the last statement of Proposition \ref{p3}  does not hold.

\begin{example} 
Consider the two-pole network in Figure \ref{pic4}, where $\mu_e = 1$ for all $e \in E$.   

\begin{figure}[H] 
\centering 
\begin{tikzpicture}[
  ->, 
  shorten >=1pt, 
  auto, 
  node distance=2.5cm,
  thick, 
  main node/.style={circle, fill=white, draw, font=\sffamily\small, thin} 
]

  \node[main node] (a) {$a$};
  \node[main node] (cp) [right of=a] {$c'$};
  \node[main node] (c) [below right of=cp] {$c$};
  \node[main node] (cpp) [above right of=c] {$c''$};
  \node[main node] (b) [right of=cpp] {$b$};

  \path
    (a) edge (cp)
    (cp) edge (c)
    (c) edge (cpp)
    (cpp) edge (b)
    (cp) edge (cpp);

\end{tikzpicture}
\caption{Equality $\mu(a,b)=\mu(a,c)+\mu(c,b)$ holds, although there is an  $(a,b)$-path avoiding~$c$}
\label{pic4}
\end{figure}

Vertex $c$ does not belong to the $(a,b)$-path $a, c', c'', b$. 
However, 
\[
\mu_{a,b} = \mu_{a,c} + \mu_{c,b} = \mu_{a,c'} + \mu_{c'',b} = 2.
\]  
There are two $(a,b)$-bridges: $e' = (a,c')$ and $e'' = (c'', b)$. 
\end{example}

Analogous examples for the limit cases L, W, and F were given in \cite{Gur22}. 
Recall that for the proper cases, $r>0$ and $s>0$, such examples do not exist. 

\begin{proposition} 
The following four statements are equivalent: 
\begin{itemize} 
\item[(i)] the metric inequality turns into equality for bridge distances;  
\item[(ii)] an $(a,b)$-path passes each $(a,c)$-bridge before each $(c,b)$-bridge; 
\item[(iii)] all $(a,b)$-paths pass each $(a,c)$-bridge before each $(c,b)$-bridge; 
\item[(iv)] for each $(u,v)$-path extendable to an $(a,b)$-path, 
$u$ is between the last $(a,c)$-bridge and $c$,  
while $v$ is between $c$ and the first $(c,b)$-bridge.  
\end{itemize} 
\end{proposition}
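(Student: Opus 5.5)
Throughout I write $B_{a,b}$, $B_{a,c}$, $B_{c,b}$ for the sets of $(a,b)$-, $(a,c)$- and $(c,b)$-bridges, and assume all three distances are finite (otherwise (i) is trivial or degenerate). I also assume positive edge lengths, or equivalently read (i) as the set identity $B_{a,b}=B_{a,c}\sqcup B_{c,b}$. The proof of Proposition \ref{p3} already gives $B_{a,b}\subseteq B_{a,c}\cup B_{c,b}$. With positive lengths, (i) is therefore equivalent to
\[
B_{a,c}\cup B_{c,b}\subseteq B_{a,b}\quad\mbox{and}\quad B_{a,c}\cap B_{c,b}=\emptyset .
\]
My plan is to prove the cycle (i)$\Rightarrow$(iii)$\Rightarrow$(ii)$\Rightarrow$(iv)$\Rightarrow$(i), using Proposition \ref{order} as the main tool. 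I read (ii) as saying that the bridges in question are $(a,b)$-bridges. Otherwise a single path says nothing about the others, and (ii) would be too weak to give (i).

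For (i)$\Rightarrow$(iii), every $e\in B_{a,c}$ and every $f\in B_{c,b}$ is an $(a,b)$-bridge. By Proposition \ref{order}, their relative order is the same on all $(a,b)$-paths, so it suffices to exhibit one $(a,b)$-path on which $e$ precedes $f$. I take a simple $(a,c)$-path $q$ and a simple $(c,b)$-path $q'$, form the walk $qq'$, and extract an $(a,b)$-path $p_0$ by chronological loop erasure. Loop erasure keeps edges in walk order, and $p_0$ still contains $e$ and $f$ because they are $(a,b)$-bridges.

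This is the step I expect to be the main obstacle. If $e$ occurs only in $q$ and $f$ only in $q'$, we are done. The bad case is when $f$ also lies on $q$ (or $e$ on $q'$), so that the surviving occurrence may be in the wrong place. To exclude it, I would first try the following argument. Suppose $f$ precedes $e$ on every $(a,b)$-path. Then the tail of $q'$ after $f$ is a path from the head of $f$ to $b$. Following an $(a,b)$-path from that point, it must still traverse $e$ before reaching $b$, and then continue to $c$ along the part of $q$ after $e$. Splicing these pieces with the part of $q'$ before $f$ should produce either a $(c,b)$-path avoiding $f$, or an $(a,c)$-path avoiding $e$, contradicting the bridge property. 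Making this splicing precise, so that the result is a genuine path and indeed avoids the bridge, is the delicate point.

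(iii)$\Rightarrow$(ii) is immediate, since $(a,b)$-paths exist. For (ii)$\Rightarrow$(iv), fix the common order $e_1,\dots,e_k$ of the $(a,c)$-bridges followed by $f_1,\dots,f_m$ of the $(c,b)$-bridges. Take a $(u,v)$-path extendable to an $(a,b)$-path $p$. Since all $(a,b)$-paths respect this order, every vertex of $p$ lies in one of the segments cut out by the bridges. I would argue that a subpath from $u$ to $v$ cannot jump across the segment containing $c$, or it would yield an $(a,b)$-path skipping $e_k$ or $f_1$; this gives the stated location of $u$ and $v$ relative to $c$. Finally, for (iv)$\Rightarrow$(i), condition (iv) says that every $(a,b)$-path must pass from the region before $e_k$ to the region after $f_1$ through $c$'s segment. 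I would deduce from this that each $(a,c)$-bridge and each $(c,b)$-bridge lies on all $(a,b)$-paths, and that the two families are separated by $c$, hence disjoint. Summing lengths then turns the inequality of Proposition \ref{p3} into equality.
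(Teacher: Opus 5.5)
\textbf{The gap: (i)$\Rightarrow$(iii).} This is exactly the step you flagged, and the repair you sketch cannot work as designed. In that step you use only that every $(a,c)$- and every $(c,b)$-bridge is an $(a,b)$-bridge. From that you aim to contradict $e\in B_{a,c}$ or $f\in B_{c,b}$. But these facts are compatible with $f$ preceding $e$. Take edges $(a,p)$, $f=(p,q)$, $e=(q,s)$, $(s,b)$, two parallel edges from $s$ to $c$, and two parallel edges from $c$ to $p$. The only $(a,b)$-path is $a,p,q,s,b$. Here $B_{a,c}=\{(a,p),f,e\}$ and $B_{c,b}=\{f,e,(s,b)\}$, so $B_{a,c}\cup B_{c,b}=B_{a,b}$, yet $f\in B_{c,b}$ precedes $e\in B_{a,c}$. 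Hence no splicing built from these ingredients can produce a $(c,b)$-path avoiding $f$ or an $(a,c)$-path avoiding $e$.

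What is missing is the disjointness $B_{a,c}\cap B_{c,b}=\emptyset$. You derived it at the start (this is where positivity of lengths enters) but never use it here. With it, your bad case disappears. Since $f\notin B_{a,c}$ and $e\notin B_{c,b}$, choose $q$ avoiding $f$ and $q'$ avoiding $e$. Then $e$ occurs in the walk $qq'$ only inside $q$, and $f$ only inside $q'$. The loop-erased $(a,b)$-path contains both, because they are $(a,b)$-bridges, and in that order. Proposition~\ref{order} then transfers the order to all $(a,b)$-paths.

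\textbf{The other links of your cycle.} These are only announced (``I would argue'', ``I would deduce''), and you never fix what (iv) means. Read literally, (iv) fails for the series network $a\to c\to b$ with $u=a$, $v=b$, where (i)--(iii) hold. Your sketch of (ii)$\Rightarrow$(iv) also presupposes that $c$ lies on the $(a,b)$-paths (``the segment containing $c$''). This need not happen even under (i). Take $e_1=(a,a')$, $f_1=(b',b)$ and doubled edges $a'\to w$, $w\to c$, $c\to w$, $w\to b'$. Then $B_{a,b}=\{e_1,f_1\}=B_{a,c}\sqcup B_{c,b}$, but every walk from $a$ through $c$ to $b$repeats $w$.

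Your cycle routes (iii)$\Rightarrow$(i) through this ill-defined condition, so prove it directly instead. Under your reading, (iii) puts every $(a,c)$- and $(c,b)$-bridge on every $(a,b)$-path, giving $B_{a,c}\cup B_{c,b}\subseteq B_{a,b}$. It also forces $B_{a,c}\cap B_{c,b}=\emptyset$, because no edge precedes itself on a path. Together with $B_{a,b}\subseteq B_{a,c}\cup B_{c,b}$ from Proposition~\ref{p3}, this is (i).

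Your preliminary caveats (positive lengths, strengthening (ii)) are correct. They are precisely what the paper's one-line proof, ``it follows immediately from Proposition~\ref{order}'', leaves implicit.
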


\proof 
It follows immediately from Proposition \ref{order}.  

\subsection{Limit case B: $r=s \rightarrow 0$}
Let us show that the bridge distance corresponds to the limit case B.

First, let us consider parallel and series connections in Figure \ref{pic2}. 

For the latter case, we have  
$\mu_{a,b}^{s/r} = \mu_{e'}^{s/r} + \mu_{e''}^{s/r}$ for all $r>0, s>0$, and 
$\mu_{a,b} = \mu_{e'} + \mu_{e''}$ for the bridge distances. 
Thus, we can set $r=s$. 

For the first case, we have  
$\mu_{a,b}^{-s} = \mu_{e'}^{-s} + \mu_{e''}^{-s}$ for all $r>0, s>0$, and 
just $\mu_{a,b} = 0$ for the bridge distances, 
since there are two $(a,b)$-paths and no $(a,b)$-bridges.  
Thus, we can set $s \rightarrow 0$.  

This immediately implies that Case B is described by the limit transition $r=s \rightarrow 0$ 
but only for the series-parallel networks. 
In general, we have to mimic the arguments 
based on the Maxwell minimal dissipation principle and applied for the cases L, W, and F 
in the last section of \cite{Gur22}. 

The Joule-Lenz heat on $e$ is defined by the current $y_e^* \geq 0$
as the integral

\begin{equation}
\label{dissipation-e}
F^*_e(y^*_e) = \int f^{-1}_e (y^*_e) \, d y^*_e  =
\frac {\mu_e^{s/r}} {1 + 1/r} (y_e^*)^{1 + 1/r},
\end{equation}

\noindent
which is a strictly convex function of  $y_e^* \geq 0$.
Furthermore, the total heat dissipated in the network is additive:
\begin{equation}
\label{dissipation}
F^*(y^*) = \sum_{e \in E} F^*_e (y^*_e).
\end{equation}

In a network with the total unit current from  $a$ to  $b$ 
it will be distributed among the edges in such a way 
that the total dissipation $F^*(y^*)$ is minimized; 
see \cite{Gur22}  for the necessary definitions and more details. 

Since $1 + 1/r > 1$ for $r > 0$, we conclude that $F^*$ 
is a strictly convex function of $y^*$ defined in the positive orthant, $y^* \geq 0$. 
Furthermore, in Case~B we have $s=r \rightarrow 0$ and, hence, $1 + 1/r \rightarrow \infty$. 
Obviously, $(1+1/r) F^*_e (y^*_e) \rightarrow 0$ when $y^*_e < 1$, and it equals $\mu_e$ when $y^*_e = 1$.  
Thus, only unit currents $y^*_e = 1$ contribute to (\ref{dissipation-e}), 
while the dissipation $F^*_e(y^*_e)$ vanishes if $y^*_e < 1$. 
But $y^*_e = 1$ only if $e$ is an $(a,b)$-bridge; otherwise, $y^*_e < 1$ in any flow minimizing $F^*(y^*)$. 
Thus, we indeed obtain the bridge distances in Case B.

\section*{Acknowledgements} 
This is an output of a research project (HSE-BR-2025-024) 
implemented as part of the Basic Research Program at HSE University.

\end{document}